 \newtheorem{thm}{Theorem}[section]
 \newtheorem{cor}[thm]{Corollary}
 \newtheorem{lem}[thm]{Lemma}
 \theoremstyle{definition}
 \theoremstyle{remark}
 \numberwithin{equation}{section}
\newcommand{\N}{\rm{I\!N}}
\newcommand{\eps}{\varepsilon}
\newcommand{\Norm}[1]{\Bigl\|#1\Bigr\|}
\newcommand{\norm}[1]{\|#1\|}
\newcommand{\betr}[1]{| #1  |}
\newcommand{\eing}[1]{_{|{#1}}}
\newcommand{\bgl}{\begin{eqnarray}}
\newcommand{\bglst}{\begin{eqnarray*}}
\newcommand{\egl}{\end{eqnarray}}
\newcommand{\eglst}{\end{eqnarray*}}
\newcommand{\Pel}{Pe\l\-czy\'ns\-ki}
\newcommand{\Ref}[1]{(\ref{#1})}
\newcommand{\id}{{{\mathrm i}\mathrm{d}}}
\begin{document}
\title{Phillips' Lemma for L-embedded Banach spaces}

\author{Hermann Pfitzner}
\address{{ANR-06-BLAN-0015}\smallskip\\
Universit\'e d'Orl\'eans,
BP 6759,
F-45067 Orl\'eans Cedex 2,
France
}
\email{hermann.pfitzner@univ-orleans.fr}
\begin{abstract}
In this note the following version of Phillips' lemma is proved.
The L-projection of an L-embedded space - that is of a Banach space which is complemented in its bidual such
that the norm between the two complementary subspaces is additive - is weak$^*$-weakly sequentially continuous.
\end{abstract}
\subjclass{46B20}
\keywords{\bf Phillips' lemma, L-embedded Banach space, L-projection}

\maketitle\noindent
Phillips' classical lemma \cite{Phillips} refers to a sequence $(\mu_n)$ in $\mbox{ba}(\N)$ (the Banach space of finitely bounded measures
on the subsets of $\N$) and states that if $\mu_n(A)\rightarrow 0$ for all $A\subset\N$ then $\sum_k \betr{\mu_n(\{i\})}\rightarrow0$.
It is routine to interpret this result as the weak$^*$-weak-sequential continuity of the canonical projection from the second dual
of $l^1$ onto $l^1$ because this continuity together with $l^1$'s Schur property gives exactly Phillips' lemma.
(Cf., for example,  \cite[Ch.\ VII]{Die-Seq}.)
Therefore the following theorem generalizes Phillips' lemma
(for the definitions see below):
\begin{thm}
The L-projection of an L-embedded Banach space is weak$^*$-weakly sequentially continuous.
\end{thm}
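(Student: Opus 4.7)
Let $(x_n^{**})$ be a bounded weak$^*$-null sequence in $X^{**}$, and write $x_n^{**} = x_n + s_n$ with $x_n = Px_n^{**}\in X$ and $s_n\in X_s$. The goal is to show $x_n\to 0$ weakly in $X$, i.e.\ $\phi(x_n)\to 0$ for every $\phi\in X^*$. Since $\phi(x_n)+s_n(\phi) = x_n^{**}(\phi)\to 0$, this is equivalent to showing $s_n\to 0$ in the weak$^*$ topology of $X^{**}$, so the theorem asserts the weak$^*$-sequential continuity of the L-decomposition.

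The plan is to argue by contradiction, modelled on Phillips' original sliding-hump proof. Suppose $s_n\not\to 0$ weak$^*$: after extracting a subsequence, there are $\phi\in B_{X^*}$ and $\delta>0$ with $|s_n(\phi)|\geq\delta$, so that by weak$^*$-lower semicontinuity of the norm also $\|s_n\|\geq\delta$ eventually. In $\ell^1$ one selects disjoint coordinate windows capturing the $\ell^1$-mass of successive atomic parts; here the abstract substitute is the L-additivity $\|x+s\| = \|x\| + \|s\|$ ($x\in X$, $s\in X_s$), combined with Goldstine's theorem. I construct inductively a subsequence $(n_k)$ and, at each step, a Goldstine approximant $y_k\in X$ of $s_{n_k}$ together with a norming functional $\psi_k\in B_{X^*}$, enforcing an ``almost disjointness'' condition: $\psi_k(y_k)$ is close to $\|s_{n_k}\|\geq\delta$, while $\psi_k$ is small on $x_{n_j}$ and $y_j$ for all $j<k$. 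This is achieved by picking $\psi_k$ and the required tolerance based on the data already constructed, and then choosing $y_k$ in a sufficiently small weak$^*$-neighbourhood of $s_{n_k}$. Pairing appropriate $\pm 1$-sign combinations of the $\psi_k$ against the $x_{n_k}^{**}$ then yields the contradiction: the L-additivity forces partial-sum norms of order $N\delta$ on one hand, while the weak$^*$-nullity of $(x_n^{**})$ forces the same quantities to remain bounded in modulus.

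\textbf{Main obstacle.} The principal difficulty is the abstract substitute for disjoint windows: L-additivity only controls cross terms in $X+X_s$, not sums within $X_s$, so the argument has to be routed through the Goldstine approximants $y_k\in X$, for which L-additivity does apply to $y_k + s_{n_j}$ when $j\neq k$. Managing the order of quantifiers in the inductive step -- choosing the norming functional $\psi_k$ and the approximation tolerance \emph{before} selecting the new $y_k$, then updating the finite test set -- and arranging the accumulated errors to be summable, is the most delicate technical point of the construction.
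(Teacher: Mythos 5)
There is a genuine gap, and it sits exactly at the step you present as the payoff of the sliding hump. The construction itself is feasible: one can indeed choose $\psi_k\in X^*$ of norm at most one, almost norming $s_{n_k}$ and almost vanishing on any prescribed finite subset of $X$, by taking a Hahn--Banach functional for $s_{n_k}$, applying $Q^*$ to land in $X^{\bot}\subset X^{***}$, and approximating weak$^*$ by Goldstine. But the final pairing does not produce a contradiction. The only upper bound available for $\bigl|x_n^{**}\bigl(\sum_{k=1}^N\pm\psi_k\bigr)\bigr|$ is $N\sup_m\norm{x_m^{**}}$, because nothing in your construction controls $\norm{\sum_{k=1}^N\pm\psi_k}$ better than $N$; that is perfectly compatible with a lower bound of order $N\delta$, so the two sides of your comparison never meet. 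To use sign combinations of the $\psi_k$ as test functionals against a weak$^*$-null sequence you need them to form a wuC series, $\norm{\sum\alpha_k\psi_k}\leq M\max\betr{\alpha_k}$ --- the abstract substitute for indicators of disjoint coordinate windows --- and almost-disjointness tested only against the finitely many vectors $x_{n_j},y_j$ does not give this. Producing that wuC series is the entire content of the paper's lemma: it is built by the principle of local reflexivity together with the decomposition $X^{***}=X_{{\rm s}}^{\bot}\oplus_{\infty}X^{\bot}$, and once it exists, $\mu_n(A)=x_n^{**}(\sum_{k\in A}x_k^*)$ defines genuine finitely additive measures to which Phillips' original lemma applies.

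There is also a structural reason the scheme cannot be repaired without a new idea: after the first sentence of your contradiction argument the witnessing functional $\phi$ is never used again, so everything downstream depends only on the weak$^*$-nullity of $(x_n^{**})$ and the bound $\norm{s_{n_k}}\geq\delta$. These data are consistent with the truth of the theorem. In $(l^1)^{**}=\mbox{ba}(\N)$, a Josefson--Nissenzweig sequence is weak$^*$-null, so by Phillips' lemma itself its $l^1$-parts tend to zero in norm; the normalized purely finitely additive parts then form a weak$^*$-null sequence $(x_n^{**})$ with $Px_n^{**}=0$ and $\norm{Qx_n^{**}}=1$. Your construction runs verbatim on this sequence, so whatever it outputs cannot be a contradiction. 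The information carried by $\phi$ --- namely, which element of $X$ the singular parts are secretly converging to --- must be threaded through the inductive construction, and this is exactly what the paper does: the fixed normalized functional $u^*$ enters conditions \Ref{gl1abis} and \Ref{gl5bis} so that the second wuC series detects the quantity $u^*(u)$, and Phillips' lemma then forces $u=0$ (the Corollary: $X_{{\rm s}}$ is weak$^*$-sequentially closed). The paper's proof of the theorem is correspondingly two-step --- first every weak$^*$-cluster point of $(Px_n^{**})$ lies in $X$, giving relative weak compactness by Eberlein--\v{S}mulian, and only then is the weak limit identified as $0$ via the Corollary --- whereas your single hump collapses these and loses precisely the step where $\phi$ must act.
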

\noindent The theorem will be proved at the end of the paper.\smallskip\\
The theorem has been known in the two particular cases when the L-embedded space in question is the predual of a von Neumann algebra
or the dual of an M-embedded Banach spece $Y$.
In the first case the result follows from \cite[Th.\ III.1]{Ake67}; in the second case $Y$ has \Pel's property (V) (\cite{GoSaa} or \cite[Th.\ III.3.4]{HWW}) and
has therefore, by \cite[Prop.\ III.3.6]{HWW}, what in \cite[p.\ 73]{Lin-Bochner} or in \cite{Uelger-Phillips} is called the weak Phillips property
whence the result by \cite[Prop.\ III.2.4]{HWW}.\medskip\\
{\em Preliminaries.}
By definition a Banach space $X$ is {\em L-embedded} (or an {\em L-summand in its bi\-dual}) if there is a linear projection $P$ on
its bidual $X^{**}$ with range $X$ such that
$\norm{Px^{**}}+\norm{x^{**}-Px^{**}}=\norm{x^{**}}$ for all
$x^{**}\in X^{**}$. The projection $P$ is called L-projection.
Throughout this note $X$ denotes an L-embedded Banach space with L-projection $P$. We have the decomposition $X^{**}=X\oplus_1 X_{{\rm s}}$
where $X_{{\rm s}}$ denotes the kernel of $P$ that is the range of the projection $Q=\id_{X^{**}}-P$.
We recall that a series $\sum z_j$ in a Banach space $Z$ is called
{\em weakly unconditionally Cauchy} (wuC for short) if
$\sum \betr{z^*(z_j)}$ converges for each $z^*\in Z^*$ or, equivalently,
if there is a number $M$ such that
$\norm{\sum_{j=1}^{n}\alpha_j z_j}\leq M\max_{1\leq j\leq n}\betr{\alpha_j}$
for all $n\in\N$ and all scalars $\alpha_j$.
The presence of a non-trivial wuC-series in a dual Banach space is equivalent to the presence
of an isomorphic copy of $l^{\infty}$.
For general Banach space theory and undefined notation we refer to \cite{JohLin}, \cite{LiTz12}, or \cite{Die-Seq}.
The standard reference for L-embedded spaces is \cite{HWW}; here we mention only that besides the Hardy space $H^1$ the preduals of
von Neumann algebras - hence in particular $L^1(\mu)$-spaces and $l^1$ - are L-embedded.
Note in passing that in general an L-embedded Banach space, contrary to $l^1$, need not be a dual Banach space.

The proof of the theorem consists of two halves. The first one states that the L-projection sends a weak$^*$-convergent sequence to a
relatively weakly sequentially compact set. This has already been proved in \cite{Pfi_Phillips-halb}. The second half asserts the existence
of the 'right' limit and can be deduced from the corollary below which states that the singular part $X_{{\rm s}}$ of the bidual is
weak$^*$-sequentially closed. Note that $X_{{\rm s}}$ is weak$^*$-closed if and only if $X$ is the dual of an M-embedded Banach space \cite[IV.1.9]{HWW}.
The following lemma contains the two main ingredients for the proof of the theorem namely two wuC-series $\sum x_k^*$ and $\sum u_k^*$ by means of which
the theorem above will reduce to Phillips' original lemma.
The first one has already been constructed in \cite{Pfi_Phillips-halb}, the construction of the second one is (somewhat annoyingly)
completely analogous, with the r\^{o}les of $P$ and $Q$ interchanged, cf.\ \Ref{gl-def} and \Ref{gl-defbis}.
(For the proof of the theorem it is not necessary to construct both wuC-series simultanuously
but there is no extra effort in doing so and it might be useful elsewhere.)
\begin{lem}
Let $X$ be L-embedded, let $(x_n)$ be a sequence in $X$ and let $(t_n)$ be a sequence in $X_{{\rm s}}$.
Furthermore, suppose that $x+x_{{\rm s}}$ is a weak$^*$-cluster point of the $x_n$ and that, along the same filter on $\N$, $u+u_{{\rm s}}$ is
a weak$^*$-cluster point of the $t_n$
(with $x, u \in X$, $x_{{\rm s}}, u_{{\rm s}}\in X_{{\rm s}}$).
Let finally $x^*, u^*\in X^*$ be normalized elements.

Then there is a sequence $({n_k})$ in $\N$ and
there are two wuC-series $\sum x_k^*$ and $\sum u_k^*$ in $X^*$ such that
\bgl
t_{n_k}(x_k^*)&=&0\label{Gl1}\quad\mbox{ for all } k\in\N,\\
\lim_{k}\,x_k^*(x_{n_k})&=&x_{{\rm s}}(x^*),\label{Gl2}\\
\lim_{k}\,t_{n_k}(u_k^*)&=&u^*(u),\label{Gl2bis}\\
u_k^*(x_{n_k})&=&0\label{Gl1bis}\quad\mbox{ for all } k\in\N.
\egl
\end{lem}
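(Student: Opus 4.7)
The plan is to carry out, in parallel, the inductive construction of \cite{Pfi_Phillips-halb} for $\sum x_k^*$ together with the formally dual construction (with the r\^oles of $P$ and $Q$ interchanged) for $\sum u_k^*$; both constructions can be performed on a single subsequence $(n_k)$.

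At the $k$-th stage I would have already fixed $n_1<\cdots<n_{k-1}$ and norm-bounded functionals $x_1^*,\ldots,x_{k-1}^*$ and $u_1^*,\ldots,u_{k-1}^*$. Using that $x+x_s$ and $u+u_s$ are weak$^*$-cluster points of $(x_n)$ and $(t_n)$ along the common filter, I would then pick $n_k$ far enough along that filter so that $x^*$, $u^*$ and all previously chosen $x_j^*,u_j^*$ evaluate on $x_{n_k}$ and on $t_{n_k}$ within $1/k$ of the corresponding cluster-point values.

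Given $n_k$, I would construct $x_k^*$ as in \cite{Pfi_Phillips-halb}: starting from $x^*$, use the L-embedding to choose an auxiliary functional in $X^*$ whose value on $x_{n_k}$ approximates the ``regular'' contribution $x^*(x)$; subtract it from $x^*$ so that the difference evaluated on $x_{n_k}$ is close to the remaining ``singular'' contribution $x_s(x^*)$; and finally add a small correction to enforce the exact equality $t_{n_k}(x_k^*)=0$. The construction of $u_k^*$ is the formal dual: exchanging the r\^oles of $P$ and $Q$, one builds $u_k^*$ so that $t_{n_k}(u_k^*)$ isolates the $X$-part $u^*(u)$ of the cluster point $u+u_s$ (the ``surprising'' regular contribution, since the $t_n$ themselves lie in $X_s$), with a small correction enforcing $u_k^*(x_{n_k})=0$.

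The wuC property of each series reduces, via the Bessaga-Pe\l czy\'nski criterion, to a uniform bound on the norms $\|\sum_{j\in F}\varepsilon_j x_j^*\|$ (and analogously for $\sum u_j^*$) over all finite $F\subset\N$ and all signs $\varepsilon_j$; this is secured by an ``almost-disjointness'' built into the inductive choice of the auxiliary functionals. The main technical difficulty --- the content of the construction in \cite{Pfi_Phillips-halb} --- is to arrange this disjointness simultaneously with the two evaluation requirements \Ref{Gl1}, \Ref{Gl2}; the new point here is merely that the two inductive procedures can be interleaved on one common subsequence, at no extra cost beyond imposing both sets of closeness conditions when choosing $n_k$.
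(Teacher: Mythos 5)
Your high-level plan coincides with the paper's: run the inductive construction of \cite{Pfi_Phillips-halb} and its formal dual (with the r\^oles of $P$ and $Q$ interchanged) simultaneously, choosing each $n_k$ far enough along the common filter that both sets of approximation conditions hold at once; the paper itself remarks that this interleaving is the only genuinely new point. But as written your proposal omits the actual mechanism of the inductive step, and where it does describe one it describes the wrong one. The tool that makes everything work --- the principle of local reflexivity, applied to a finite-dimensional $E\subset X^{***}$ spanned by the functionals built so far together with their images under $P^*$, against a finite-dimensional $F\subset X^{**}$ containing $x_{n_{k+1}}$, $t_{n_{k+1}}$, $x$, $x_{{\rm s}}$, $u$, $u_{{\rm s}}$ --- does not appear in your sketch. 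In the paper the new term and the new ``residual'' are $x_{k+1}^*=RP^*y_k^*$ and $y_{k+1}^*=RQ^*y_k^*$ (dually $u_{k+1}^*=RQ^*v_k^*$, $v_{k+1}^*=RP^*v_k^*$), and the uniform bound giving the wuC property is not an ``almost-disjointness'' of the functionals but the exact identity $X^{***}=X_{{\rm s}}^{\bot}\oplus_{\infty}X^{\bot}$, which yields $\norm{Q^*a+P^*b}=\max\{\norm{Q^*a},\norm{P^*b}\}$; the $(1+\eps_{k+1})$-loss of $R$ at each stage then accumulates only to the convergent product $\prod(1+\eps_j)$. Without this (or an equivalent device) your claim that the bound ``is secured by an almost-disjointness built into the inductive choice'' is an assertion, not an argument.

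Moreover, the exact equalities \Ref{Gl1} and \Ref{Gl1bis} are not obtained by ``adding a small correction'' at the end of each step: they hold automatically, since $t_{n_{k+1}}(RP^*y_k^*)=(P^*y_k^*)(t_{n_{k+1}})=y_k^*(Pt_{n_{k+1}})=0$ and $(RQ^*v_k^*)(x_{n_{k+1}})=v_k^*(Qx_{n_{k+1}})=0$ by the exactness of the local reflexivity operator on $F$. An a posteriori correction would have to be shown not to destroy the norm estimate (hence the wuC property) nor the limits \Ref{Gl2} and \Ref{Gl2bis}, and it is unclear how you would bound such a correction without already knowing that $t_{n_{k+1}}$ nearly annihilates your candidate functional --- which is precisely what composing with $P^*$ achieves exactly. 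So the strategic outline is right, but the core of the proof is missing.
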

\begin{proof}
Let $1>\eps>0$ and let $(\eps_j)$ be a sequence of numbers decreasing to zero
such that  $0<\eps_j <1$ and $\prod_{j=1}^{\infty} (1+\eps_j)<1+\eps$.

By induction over $k\in \N_0=\N\cup\{0\}$ we shall construct four sequences
$(x_k^*)_{k\in\N_0}$, $(y_k^*)_{k\in\N_0}$,
$(u_k^*)_{k\in\N_0}$ and $(v_k^*)_{k\in\N_0}$  in $X^*$
(of which the first members $x_0^*$, $y_0^*$ $u_0^*$, and $v_0^*$ are auxiliary  elements used
only for the induction) and an increasing sequence $(n_k)$ of indices such that,
for all (real or complex) scalars $\alpha_j$ and with $\beta=x_{{\rm s}}(x^*)$, $\gamma=u^*(u)$, the following conditions hold for all $k\in \N_0$:
\bgl
x_0^*=0, && \norm{y_0^*}=1,                                              \label{gl0}\\
u_0^*=0, && \norm{v_0^*}=1,                                              \label{gl0bis}\\
\Norm{\alpha_0 y_k^*+\sum_{j=1}^{k}\alpha_j x_j^*}
&\leq&
\Bigl(\prod_{j=1}^{k} (1+\eps_j)\Bigr)\max_{0\leq j\leq k}\betr{\alpha_j},
                                                      \quad\mbox{if } k\geq 1,       \label{gl2}\\
\Norm{\alpha_0 v_k^*+\sum_{j=1}^{k}\alpha_j u_j^*}
&\leq&
\Bigl(\prod_{j=1}^{k} (1+\eps_j)\Bigr)\max_{0\leq j\leq k}\betr{\alpha_j},
                                                      \quad\mbox{if } k\geq 1,       \label{gl2bis}\\
t_{n_k}(x_k^*)&=&0,                             \label{gl4}\\
u_k^*(x_{n_k})&=&0,                             \label{gl4bis}\\
y_k^*(x)=0, &\mbox{and} & x_{{\rm s}}(y_k^*)=\beta, \label{gl1a}\\
u_{{\rm s}}(v_k^*)=0, &\mbox{and} & v_k^*(u)=\gamma, \label{gl1abis}\\
\betr{x_k^*(x_{n_k}) - \beta}&<&\eps_{k},     \quad\mbox{if } k\geq 1,\label{gl5}\\
\betr{t_{n_k}(u_k^*) - \gamma}&<&\eps_{k},     \quad\mbox{if } k\geq 1.\label{gl5bis}
\egl
We set $n_0=1$, $x_0^*=0$, $y_0^*=x^*$, $u_0^*=0$ and $v_0^*=u^*$.\\
For the following it is useful to recall some properties of $P$:
The restriction of $P^*$ to $X^*$ is an isometric isomorphism
from $X^*$ onto $X_{{\rm s}}^{\bot}$ with $(P^*y^*)\eing{X}=y^*$ for all $y^*\in X^*$,
$Q$ is a contractive projection and
$X^{***}=X_{{\rm s}}^{\bot}\oplus_{\infty} X^{\bot}$
(where $X^{\bot}$ is the annihilator of $X$ in $X^{***}$).

For the induction step
suppose now that $x_0^*, \ldots, x_k^*$, $y_0^*, \ldots, y_k^*$, $u_0^*, \ldots, u_k^*$, $v_0^*, \ldots, v_k^*$ and $n_0, \ldots, n_k$ have been constructed
and satisfy conditions \Ref{gl0} -  \Ref{gl5bis}.
Since $x+x_{{\rm s}}$ is a weak$^*$-cluster point of the $x_n$ and $u+u_{{\rm s}}$ is a weak$^*$-cluster point of the $t_n$ along the same filter
there is an index $n_{k+1}$ such that
\bgl
\betr{x_{{\rm s}}(y_k^*) - y_k^*(x_{n_{k+1}}-x)}&<&\eps_{k+1}, \label{gl6a}\\
\betr{t_{n_{k+1}}(v_k^*) - (u+u_{{\rm s}})(v_k^*)}&<&\eps_{k+1}, \label{gl6abis}
\egl
Put
\bglst
E&=&{\rm lin}(\{x^*, x_0^*, \ldots, x_k^*, y_k^*, P^*x_0^*, \ldots, P^*x_k^*, P^*y_k^*,\\
&&\quad\quad
u^*, u_0^*, \ldots, u_k^*, v_k^*, P^*u_0^*, \ldots, P^*u_k^*, P^*v_k^*\})\subset X^{***},\\
F&=&{\rm lin}(\{ x_{n_{k+1}}, t_{n_{k+1}}, x, x_{{\rm s}}, u, u_{{\rm s}} \})\subset X^{**}.
\eglst 
Clearly $Q^*x_j^*,\, Q^*y_k^*, Q^*u_j^*,\, Q^*v_k^*\in E$ for $0\leq j\leq k$.
By the principle of local reflexivity there is an operator
$R:E\rightarrow X^*$ such that
\bgl
\norm{Re^{***}}&\leq&(1+\eps_{k+1})\norm{e^{***}},\label{gl7}\\
f^{**}(Re^{***})&=&e^{***}(f^{**}),\label{gl8}\\
R\eing{E\cap X^*}&=&\id_{E\cap X^*}\label{gl9}
\egl
for all $e^{***}\in E$ and $f^{**}\in F$.

We define
\bgl
x_{k+1}^*=RP^*y_k^*  &\mbox{ and }&  y_{k+1}^*=RQ^*y_k^*,\label{gl-def}\\
u_{k+1}^*=RQ^*v_k^*  &\mbox{ and }&  v_{k+1}^*=RP^*v_k^*.\label{gl-defbis}
\egl
In the following we use the convention $\sum_{j=1}^{0} (\cdots) =0$. Then we have that
\bglst
\alpha_0 y_{k+1}^* +\sum_{j=1}^{k+1}\alpha_j x_j^*
&=& R\Bigl(Q^*(\alpha_0 y_k^* +\sum_{j=1}^{k}\alpha_j x_j^*) + P^*(\alpha_{k+1} 
y_k^* +\sum_{j=1}^{k}\alpha_j x_j^*)\Bigr),\\
\alpha_0 v_{k+1}^* +\sum_{j=1}^{k+1}\alpha_j u_j^*
&=& R\Bigl(P^*(\alpha_0 v_k^* +\sum_{j=1}^{k}\alpha_j u_j^*) + Q^*(\alpha_{k+1} 
v_k^* +\sum_{j=1}^{k}\alpha_j u_j^*)\Bigr).
\eglst
Now (\ref{gl2}) (for $k+1$ instead of $k$) can be seen as follows:
\bgl
\lefteqn{\Norm{\alpha_0 y_{k+1}^* +\sum_{j=1}^{k+1}\alpha_j x_j^*}\leq}\nonumber\\
&\stackrel{\Ref{gl7}}{\leq}&
(1+\eps_{k+1})
\Norm{Q^*(\alpha_0 y_k^* +\sum_{j=1}^{k}\alpha_j x_j^*) +P^*(\alpha_{k+1} y_k^* 
+\sum_{j=1}^{k}\alpha_j x_j^*)}\nonumber\\
&=&
(1+\eps_{k+1})\max\Bigl\{\Norm{Q^*(\alpha_0 y_k^* +\sum_{j=1}^{k}\alpha_j x_j^*)},
               \Norm{P^*(\alpha_{k+1} y_k^* +\sum_{j=1}^{k}\alpha_j 
x_j^*)}\Bigr\}\nonumber\\
&\leq&
(1+\eps_{k+1})\max\Bigl\{\Norm{\alpha_0 y_k^* +\sum_{j=1}^{k}\alpha_j x_j^*},
                    \Norm{\alpha_{k+1} y_k^* +\sum_{j=1}^{k}\alpha_j x_j^*}\Bigr\}   \nonumber\\
&\leq&
\Bigl(\prod_{j=1}^{k+1}(1+\eps_j)\Bigr)\max\{\max_{0\leq j\leq k}\betr{\alpha_j},
                                             \max_{1\leq j\leq k+1}\betr{\alpha_j}\}           \nonumber\\
&=&
\Bigl(\prod_{j=1}^{k+1}(1+\eps_j)\Bigr)\max_{0\leq j\leq k+1}\betr{\alpha_j}   \nonumber
\egl
where the last inequality comes from \Ref{gl0} if $k=0$, and from \Ref{gl2}, if $k\geq 1$.

Likewise, (\ref{gl2bis}) (for $k+1$ instead of $k$) is proved.

The conditions (\ref{gl4}) and (\ref{gl1a}) (for $k+1$ instead of $k$) are easy to
verify because $Pt_{n_{k+1}}=0$, $Qx=0$ and $Qx_{{\rm s}}=x_{{\rm s}}$ thus, by \Ref{gl8}
\bglst
t_{n_{k+1}}(x_{k+1}^*)=Pt_{n_{k+1}}(y_k^*)=0,\\
y_{k+1}^*(x)=Qx(y_k^*)=0\quad\mbox{ and }\quad
x_{{\rm s}}(y_{k+1}^*)=Q^*y_k^*(x_{{\rm s}})=x_{{\rm s}}(y_k^*)=\beta.
\eglst
In a similar way we obtain  (\ref{gl4bis}) and (\ref{gl1abis}) (for $k+1$ instead of $k$) by
$u_{k+1}^*(x_{n_{k+1}})=RQ^*v_k^*(x_{n_{k+1}})=Qx_{n_{k+1}}(v_k^*)=0$, $u_{{\rm s}}(v_{k+1}^*)=Pu_{{\rm s}}(v_k^*)=0$ and
$v_{k+1}^*(u)=v_k^*(u)=\gamma$.

Finally, we have
\bglst
x_{k+1}^*(x_{n_{k+1}}) - \beta = y_k^*(x_{n_{k+1}})- \beta =y_k^*(x_{n_{k+1}}-x)- x_{{\rm s}}(y_k^*)
\eglst
by \Ref{gl1a} whence \Ref{gl5} for $k+1$ by \Ref{gl6a}.
Analogously, we get \Ref{gl5bis} for $k+1$ via \Ref{gl6abis} and $t_{n_{k+1}}(u_{k+1}^*)=t_{n_{k+1}}(v_k^*)$ and $(u+u_{{\rm s}})(v_k^*)=\gamma$
by  \Ref{gl1abis}.\\
This ends the induction and the lemma follows immediately.\end{proof}
\begin{cor}
The complementary space $X_{{\rm s}}$ of an L-embedded Banach space $X$ is weak$^*$-sequentially closed.
\end{cor}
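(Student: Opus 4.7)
The plan is to suppose $(t_n)\subset X_{{\rm s}}$ weak$^*$-converges to $z^{**}\in X^{**}$, decompose $z^{**}=u+u_{{\rm s}}$ with $u=Pz^{**}$ and $u_{{\rm s}}=Qz^{**}$, and show $u=0$. Assume toward contradiction that $u\neq 0$; by Hahn--Banach, pick $u^*\in X^*$ with $\norm{u^*}=1$ and $u^*(u)=\norm{u}$. Apply the lemma with the trivial null sequence $x_n\equiv 0$ in $X$ (so $x=x_{{\rm s}}=0$), any norm-one $x^*$, and the above $u^*$. The lemma then produces an increasing sequence $(n_k)$ and a wuC series $\sum u_k^*$ in $X^*$ with $\lim_k t_{n_k}(u_k^*) = u^*(u) = \norm{u} > 0$. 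Unwinding the construction $u_k^*=RQ^*v_{k-1}^*$ via the local-reflexivity relation $f^{**}(Re^{***})=e^{***}(f^{**})$ applied at $f^{**}\in\{u,u_{{\rm s}}\}\subset F$, together with $Qu=0$, $Qu_{{\rm s}}=u_{{\rm s}}$, and \Ref{gl1abis}, one obtains $u_k^*(u)=0$ for all $k\geq 1$ and $u_{{\rm s}}(u_k^*)=0$ for all $k\geq 2$; hence $z^{**}(u_k^*)=0$ for every $k\geq 2$.

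The next step reduces matters to Phillips' classical lemma. The wuC series together with the uniform boundedness principle yields a bounded operator $T\colon X^{**}\to l^1$ given by $T(t)_k=t(u_k^*)$, and $T(z^{**})_k=0$ for $k\geq 2$. Since $X^*$ is weak$^*$-sequentially complete, for every $A\subset\N$ the bounded weak$^*$-Cauchy partial sums $\sum_{k\in A,\,k\leq N} u_k^*$ weak$^*$-converge to some $\sigma_A\in X^*$. Combining this with $t_n\to z^{**}$ in the weak$^*$-topology and the specific form of the constructed $u_k^*$, one argues that $\mu_n:=T(t_n)-T(z^{**})\in l^1\subset\mbox{ba}(\N)$ satisfies $\mu_n(A)\to 0$ for every $A\subset\N$. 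Phillips' original lemma then gives $\norm{\mu_n}_{l^1}\to 0$, and in particular $\sup_{k\geq 2}\betr{t_n(u_k^*)}\to 0$ as $n\to\infty$.

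Taking the diagonal and using that $n_k\to\infty$ yields $t_{n_k}(u_k^*)\to 0$, which contradicts $\lim_k t_{n_k}(u_k^*)=\norm{u}>0$. Hence $u=0$ and $z^{**}\in X_{{\rm s}}$. The main obstacle is the set-wise convergence step $\mu_n(A)\to 0$: a typical $t\in X_{{\rm s}}$ is not weak$^*$-continuous on $X^*$, so the identification $\sum_{k\in A}t(u_k^*)=t(\sigma_A)$ is not automatic, and one must exploit the specific structure of the $u_k^*$ produced by the lemma to bridge this gap and close the reduction to Phillips.
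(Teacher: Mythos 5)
Your overall strategy --- apply the lemma to produce the wuC-series $\sum u_k^*$ with $\lim_k t_{n_k}(u_k^*)=u^*(u)$ and then reduce to Phillips' classical lemma --- is the paper's strategy, and your unwinding of the construction to get $z^{**}(u_k^*)=0$ for $k\geq 2$ is correct (the paper sidesteps that unwinding by replacing $s_n$ with $t_n=s_n-v_{{\rm s}}$, so that the cluster point has $u_{{\rm s}}=0$, and by taking $x_n=u$ constant, so that \Ref{Gl1bis} yields $u_k^*(u)=0$ directly from the statement of the lemma). But the step you yourself flag as ``the main obstacle'' is a genuine gap, and it is exactly where the proof is won or lost. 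You define $\mu_n$ as the element $T(t_n)-T(z^{**})$ of $l^1$, that is, as the \emph{countably additive} measure $A\mapsto\sum_{k\in A}(t_n-z^{**})(u_k^*)$, and you then need $\mu_n(A)\to 0$ for every $A\subset\N$. What weak$^*$ convergence of $(t_n)$ actually gives is $(t_n-z^{**})(\sigma_A)\to 0$, where $\sigma_A\in X^*$ is the weak$^*$ limit of the partial sums $\sum_{k\in A,\,k\leq N}u_k^*$; but, as you note, the identity $\sum_{k\in A}t(u_k^*)=t(\sigma_A)$ fails in general for $t\notin X$, and nothing in the ``specific structure of the $u_k^*$'' is offered to bridge this. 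As written, the set-wise convergence is unproved and the reduction to Phillips does not close.

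The resolution in the paper is to give up countable additivity rather than fight for the interchange: define $\mu_n(A):=(t_n-u)(\sigma_A)$ directly. This is a finitely additive measure (finite additivity of $A\mapsto\sigma_A$ survives the weak$^*$ limits of partial sums), the convergence $\mu_n(A)\to 0$ for every $A$ is then immediate from weak$^*$ convergence since each $\sigma_A$ lies in $X^*$, and --- the key observation --- Phillips' lemma is a statement about $\mathrm{ba}(\N)$ whose conclusion involves only the singletons, where $\sigma_{\{j\}}=u_j^*$ exactly, so that $\mu_n(\{j\})=(t_n-u)(u_j^*)$ and one still concludes $\sum_j\betr{(t_{n_k}-u)(u_j^*)}\to 0$. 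At no point does one need $\mu_n(A)$ to equal $\sum_{k\in A}\mu_n(\{k\})$. With that replacement your argument closes (and your diagonal step and the contradiction with $\lim_k t_{n_k}(u_k^*)=\norm{u}>0$ are fine); without it, it does not.
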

\begin{proof}
Suppose that $(s_n)$ is a sequence in $X_{{\rm s}}$ that weak$^*$-converges to $v+v_{{\rm s}}$.
Let $u^*\in X^*$ be normalized, set $t_n=s_n-v_{{\rm s}}$.
We apply the lemma to $(t_n)$ with $u=v$, $u_{{\rm s}}=0$ and $x_n=u$ and
define a sequence $(\mu_n)$ of finitely additive measures on the subsets of $\N$ by
$\mu_n(A)=(t_n-u)(\sum_{k\in A}u_k^*)$ for all $A\subset \N$ where $\sum_{k\in A}u_k^* \in X^*$ is to be understood in the weak$^*$-topology of $X^*$
and where the $u_k^*$ are given by the lemma.
Then $\mu_n(A)\rightarrow0$ for all $A\subset \N$ and by Phillips' original lemma we get
\bglst \betr{t_{n_k}(u_k^*)}\stackrel{\Ref{Gl1bis}}{=} \betr{(t_{n_k}-u)(u_k^*)}\leq\sum_{j}\betr{(t_{n_k}-u)(u_j^*)} = \sum_{j}\betr{\mu_{n_k}(\{j\})} \rightarrow0.
\eglst
Thus $u^*(u)=0$ by \Ref{Gl2bis} and $u=0$ because $u^*$ was arbitrary in the unit sphere of $X^*$.
Hence $(t_{n_k})$ weak$^*$-converges to 0 which  is enough to see that $(s_n)$ weak$^*$-converges to $v_{{\rm s}}$ in $X_{{\rm s}}$.\end{proof}
\begin{proof}\noindent{\em Proof of the theorem:}
Let $X$ be an L-embedded Banach space with L-projection $P$.
Suppose that the sequence $(x_n^{**})$ is weak$^*$-null and that $x_n^{**}=x_n +t_n$ with $x_n=Px_n^{**}$.
Let $x^*$ be a normalized element of $X$.
The sequence $(x_n)$ is bounded and admits a weak$^*$-cluster point $x+x_{{\rm s}}$.
We use the lemma, this time with the wuC-series $\sum x_k^*$, like in the proof of the corollary and
define a sequence $(\mu_n)$ of finitely additive measures on the subsets of $\N$ by
$\mu_n(A)=x_n^{**}(\sum_{k\in A}x_k^*)$ for all $A\subset \N$.
Then $\mu_n(A)\rightarrow0$ for all $A\subset \N$ and by \Ref{Gl1} and Phillips' original lemma we get
\bglst \betr{x_k^*(x_{n_k})}=\betr{x_{n_k}^{**}(x_k^*)}\leq\sum_{j}\betr{x_{n_k}^{**}(x_j^*)} = \sum_{j}\betr{\mu_{n_k}(\{j\})} \rightarrow0.
\eglst
Thus $x_{{\rm s}}(x^*)=0$ by \Ref{Gl2} and $x_{{\rm s}}=0$ because $x^*$ was arbitrary in the unit sphere of $X^*$.
It follows that each weak$^*$-cluster point of the set consisting of the $x_n$ lies in $X$. Hence this set is relatively weakly sequentially compact
by the theorem of Eberlein-\v{S}mulian. If $x$ is the limit of a weakly convergent sequence $(x_{n_m})$ then $(t_{n_m})$ weak$^*$-converges to $-x$.
Hence $x=0$ by the corollary.
This shows that the sequence $(x_n)$ is weakly null and proves the theorem.\end{proof}


\begin{thebibliography}{10}

\bibitem{Ake67}
Ch.~A. Akemann.
\newblock {The dual space of an operator algebra}.
\newblock {\em Trans. Amer. Math. Soc.}, 126:286--302, 1967.

\bibitem{Die-Seq}
J.~Diestel.
\newblock {\em Sequences and Series in Banach Spaces}.
\newblock Springer, Berlin-Heidelberg-New York, 1984.

\bibitem{GoSaa}
G.~Godefroy and P.~Saab.
\newblock Weakly unconditionally convergent series in {$M$}-ideals.
\newblock {\em Math. Scand.}, 64:307--318, 1989.

\bibitem{HWW}
P.~Harmand, D.~Werner, and W.~Werner.
\newblock {\em {$M$}-ideals in {Banach} {Spaces} and {Banach} {Algebras}}.
\newblock Lecture Notes in Mathematics 1547. Springer, 1993.

\bibitem{JohLin}
W.~B. Johnson and J.~Lindenstrauss.
\newblock {\em Handbook of the Geometry of Banach Spaces, Volumes 1 and 2}.
\newblock North Holland, 2001, 2003.

\bibitem{Lin-Bochner}
P.~K. Lin.
\newblock {\em K{\"{o}}the-{Bochner} function spaces}.
\newblock Birkh{\"{a}}user, Boston, 2004.

\bibitem{LiTz12}
J.~Lindenstrauss and L.~Tzafriri.
\newblock {\em Classical Banach Spaces I and II}.
\newblock Springer, Berlin-Heidelberg-New York, 1977, 1979.

\bibitem{Pfi_Phillips-halb}
H.~Pfitzner.
\newblock L-embedded {Banach} spaces and a weak version of {Phillips} lemma.
\newblock Birkh{\"a}user.
\newblock to appear.

\bibitem{Phillips}
R.~S. Phillips.
\newblock On linear transformations.
\newblock {\em Trans. Amer. Math. Soc.}, 48:516--541, 1940.

\bibitem{Uelger-Phillips}
A.~{\"U}lger.
\newblock The weak {Phillips} property.
\newblock {\em Colloq. Math.}, 87:147--158, 2001.

\end{thebibliography}
\end{document}